\documentclass{article}

\usepackage{arxiv}

\usepackage[utf8]{inputenc} % allow utf-8 input
\usepackage[T1]{fontenc}    % use 8-bit T1 fonts
\usepackage{hyperref}       % hyperlinks
\usepackage{url}            % simple URL typesetting
\usepackage{booktabs}       % professional-quality tables
\usepackage{amsfonts}       % blackboard math symbols
\usepackage{nicefrac}       % compact symbols for 1/2, etc.
\usepackage{microtype}      % microtypography
\usepackage{lipsum}
\usepackage{amsmath}

\usepackage{color}

\usepackage{graphicx}

\usepackage{subfig}
\graphicspath{{figuras/}{fig_site/}}

\newtheorem{definition}{\noindent{\it Definition}}[section]
\newtheorem{theorem}{\noindent{\it Theorem}}[section]

\newtheorem{remark}[theorem]{\noindent{\it Remark}}
\newtheorem{corollary}[theorem]{\noindent{\it Corollary}}
\newenvironment{proof}{\noindent{\it Proof:}}{$\hfill$ $\Box$\\ }

\title{A numerical method for a class of nonlinear fractional advection-diffusion equations}

\author{
Jocemar de Q. Chagas%\thanks{Corresponding author.} 
\\
Departamento de Matemática e Estatística\\
Universidade Estadual de Ponta Grossa\\
Ponta Grossa, PR 84030-900, Brazil \\
\texttt{jocemarchagas@uepg.br} \\
\And
Giuliano G. La Guardia \\
Departamento de Matemática e Estatística\\
Universidade Estadual de Ponta Grossa\\
Ponta Grossa, PR 84030-900, Brazil \\
\texttt{gguardia@uepg.br} \\
\And
Ervin K. Lenzi \\
Departamento de Física\\
Universidade Estadual de Ponta Grossa\\
Ponta Grossa, PR 84030-900, Brazil \\
\texttt{eklenzi@uepg.br} \\
}

\begin{document}
\maketitle
	
\begin{abstract}
	In this note, a numerical method based on finite differences to solve a class of nonlinear advection-diffusion fractional differential equation is proposed. The fractional operator considered here is the fractional Riemann-Liouville derivative or the fractional Riesz derivative of order $\alpha$. The consistency and unconditionally stability of the method are shown. Finally, an example of application of this method is presented.  
\end{abstract}
	
\keywords{Nonlinear advection-diffusion fractional differential equations \and Finite Difference Method \and Riemann-Liouville fractional derivative  \and Fractional Riesz derivative \and Consistency and unconditionally stability}

\section{Introduction}
	
A numerical approach based on finite difference method is proposed for solving the nonlinear fractional diffusion equation:
\begin{eqnarray} \label{Eq01}
	\frac{\partial }{\partial t}{u}(x,t) =  {c}^{2}\frac{\partial^{\alpha}}{\partial {|x|}^{\alpha}}{u}^{\nu}(x,t) ,
\end{eqnarray}
where $L \leq x \leq R$ and $t > 0$.
%, and the derivative operator   $\frac{\partial^{\alpha}}{\partial {|x|}^{\alpha}}$ is in the sense of Riemann-Liouville or of Riesz. 
	
Here, $u(x,t) \geq 0$ is a unknown function, which may be related, e.g, to an density of probability, $c^2$ represents the diffusion coefficient, and the fractional operator $\displaystyle \frac{\partial^{\alpha}}{\partial {|x|}^{\alpha}}$ is the fractional Riemann-Liouville derivative or the fractional Riesz derivative of order $\alpha$ (for details, see Ref.~\cite{Li2018}). It is worth mentioning that different fractional operators (for instance, see Refs.~\cite{caputo2015new,atangana2016new,YANG2017276}) have been utilized to investigate several situations, such as neumatic liquid crystal~\cite{neumaticliquidcrystal}, $(2 + 1)$ - dimensional mKdV equation~\cite{hosseini2020detailed}, optical solitons~\cite{HOSSEINI2020164801}, anomalous diffusion~\cite{10.3389/fphy.2017.00052}, Chua's circuit model~\cite{ALKAHTANI2016547}, among others. We also consider $1 < \alpha \leq 2$ and $\nu\neq 1$ which are usually related to unusual relaxation processes and allow us to deal with problems related to anomalous diffusion process  (see for instance \cite{Book2018}). Further, it is worth mentioning that Eq.~(\ref{Eq01}) may be related to a nonlinear stochastic equation with a colored noise~\cite{10.1063}. 
We present an implicit Euler method, which holds for $\nu > 0$, and show that it is consistent and unconditionally stable, therefore, convergent from the Rosinger Theorem~\cite{Rosinger}, which is a nonlinear extension of the celebrated Lax-Richtmyer equivalence theorem~\cite{Lax}. Additionally, we indicate a form of iterating that allows us to implement the method. 
	
We observe that our method presented here can be directly extended for a more complete set of equations, i.e., nonlinear advection-diffusion fractional equation, of the form
\begin{eqnarray} \label{Eq02}
	\frac{\partial}{\partial t}{u}(x,t) = - a(x) \frac{\partial }{\partial x} {u}(x,t) +
	b(x) \frac{\partial^{\alpha}}{\partial {|x|}^{\alpha}}{u}^{\nu}(x,t)+f(x,t)\; ,
\end{eqnarray}
where $a(x), b(x) \geq 0$ for all $x \in \mathbb{R}$. For simplicity, we only present the proof for Eq.~(\ref{Eq01}).

\section{Preliminaries}
	
In this section, we present some concepts used in this note. According to Ref.~\cite{Capelas}, we have the following definitions.
	
\begin{definition} \label{L}
The Liouville fractional derivative  with order $\alpha > 0$ of a given function $f(x)$, $x \in \mathbb{R}$, is defined as
\begin{eqnarray} \label{Eq03e}
	{}^{}{D}^{\alpha} f(x) = 
	\frac{1}{\Gamma(1-\alpha)} \frac{{d}}{d {x}} \int_{-\infty}^{x} \frac{f(\eta)}{{(x - \eta)}^{\alpha}} d\eta \, ,
\end{eqnarray}
where $\Gamma(\cdot)$ is the Euler's gamma function.
\end{definition}
	
\begin{definition} \label{LRRL}
The left and right Riemann-Liouville fractional derivative  with order $\alpha > 0$ of a given function $f(x)$, $x \in (a,b)$, are defined, respectively, as
\begin{eqnarray} \label{Eq03}
	{}^{RL}{D}_{a,x}^{\alpha} f(x) = 
	\frac{1}{\Gamma(n-\alpha)} \frac{{d}^{n}}{d {x}^{n}} \int_{a}^{x} \frac{f(\eta)}{{(x - \eta)}^{\alpha + 1 - n}} d\eta
\end{eqnarray}
\begin{eqnarray*}\label{Eq03a}
	{}^{RL}{D}_{x,b}^{\alpha} f(x) = 
	\frac{{(-1)}^{n}}{\Gamma(n-\alpha)} \frac{{d}^{n}}{d {x}^{n}} \int_{x}^{b} \frac{f(\eta)}{{(x - \eta)}^{\alpha + 1 - n}} d\eta, 
\end{eqnarray*}
where $n$ is an positive integer such that $n-1 < \alpha \leq n$. As usual in the literature, we call by Riemann-Liouville fractional derivative the left fractional derivative in (\ref{Eq03}).
\end{definition}
	
\begin{definition} \label{RL}
The 
%left and right 
Grünwald-Letnikov derivative of order $\alpha > 0$ of a given function $f(x)$, $x \in (a,b)$, are defined, respectively, as
\begin{eqnarray*} \label{Eq03b}
	{}^{GL}{D}_{a,x}^{\alpha} f(x) = \lim_{h \to 0}
	\frac{1}{h^\alpha} \sum_{k = 0}^{\lfloor n \rfloor} {(-1)}^{k}
	\frac{\Gamma(\alpha+1)f(x-kh)}{\Gamma(k+1)\Gamma(\alpha-k+1)},  \quad nh = x - a.
	%  \\
	%  \label{Eq03c}
	%  {}^{GL}{D}_{x,b}^{\alpha} f(x) = \lim_{h \to 0}
	%  \frac{1}{h^\alpha} \sum_{k = 0}^{\lfloor n \rfloor} {(-1)}^{n}
	%  \frac{\Gamma(\alpha+1)f(x+kh)}{\Gamma(k+1)\Gamma(\alpha-k+1)}, \quad nh = b - x.
\end{eqnarray*}
\end{definition}
	
\begin{definition} \label{RZ}
The Riesz fractional derivative of order $\alpha$ is defined as
\begin{eqnarray*} \label{Eq03.1}
	{}^{RZ}{D}_{x}^{\alpha} f(x) = \frac{-1}{2cos(\alpha \pi /2)} \left( {}^{RL}{D}_{a,x}^{\alpha} f(x) + {}^{RL}{D}_{x,b}^{\alpha} f(x) \right) \,,
\end{eqnarray*}
if $\alpha \neq 2k+1$, $k = 0, 1, 2, \cdots$. 
\end{definition}
	
The following definitions are necessary for the approximations used in the discretizations.
	
\begin{definition} \label{BD}
The standard backward difference operator and the second-order centered difference operator are given respectively by 
\begin{eqnarray*} \label{Eq03t}
	\displaystyle 
	{\mathbb D}_{+} ({v}_{m}) &=& \frac{{v}_{m}-{v}_{m-1}}{h}, \\
	\label{Eq03t1}
	\displaystyle 
	{\mathbb D}_{+}{\mathbb D}_{-} ({v}_{m}) &=& \frac{{v}_{m+1}+2v_{m}-{v}_{m-1}}{h^{2}}.
\end{eqnarray*}
\end{definition}
	
\begin{definition} \label{GL} 
The standard Grünwald-Letnikov approximation (\cite{podlubny}, \cite{Li}, \cite{Karniadakis}) is given by :  
\begin{eqnarray*} \label{Eq04}
	{}^{GL}{D}_{a,x}^{\alpha} f(x)
	& \approx & 
	h^{- \alpha} \sum_{i = 0}^{M} w_{i}^{\alpha} f(x - ih) \, ,
\end{eqnarray*}
where $\displaystyle w_{i}^{\alpha} = {(-1)}^{i} {{\alpha}\choose{i}}$.
\end{definition}
	
\begin{definition} \label{ShiG} 
The shifted Grünwald formula (\cite{Li2018}, \cite{Karniadakis}, \cite{meerschaert}) is given by:
\begin{eqnarray} \label{Eq06}
	{}^{sGL}{D}_{a,x}^{\alpha} f(x)
	& \approx & 
	h^{-{\alpha}} \sum_{i = 0}^{M+1} w_{i}^{\alpha}f(x - (i-p)h),
\end{eqnarray}
where $p$ is a non-negative integer.
\end{definition}
	
We next recall the well-known definition, which will be useful in our analysis:
	
\begin{definition}\label{def1}
Let $A \in {\mathbb{R}}^{n \times n}$ be a matrix. We say that $A$ is strictly row diagonally dominant if
\begin{eqnarray*} \label{Eq07}
	| a_{ii}| > \displaystyle\sum_{j=1, j\neq i}^{n}|a_{ij}|,
\end{eqnarray*} for $i \in \{1,2,\cdots,n\}$.
\end{definition}
	
Based on Definition~\ref{def1}, it follows Theorem 4.1.2 of \cite{Golub}:
	
\begin{theorem} \label{Thm1}
If $A \in {\mathbb{R}}^{n \times n}$ is strictly row diagonally dominant then $A$ is invertible and ${\| A^{-1}\|}_{1} \leq 1/\delta$, where 
\begin{eqnarray*} %\label{Eq08}
	\delta= \displaystyle\min_{1 \leq j \leq n} \left(|a_{jj}|-\displaystyle\sum_{i=1, i\neq j}^{n}|a_{ij}|\right) > 0.
\end{eqnarray*}
\end{theorem}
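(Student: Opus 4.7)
The plan is to reduce both conclusions to a single componentwise estimate, namely that $\|Ay\|_1 \geq \delta\,\|y\|_1$ for every $y \in \mathbb{R}^n$. This estimate immediately gives injectivity (hence invertibility, since $A$ is square) and, after setting $y = A^{-1}b$, delivers the desired norm bound $\|A^{-1}\|_1 \leq 1/\delta$ from the definition of the induced operator norm.

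First I would write out $(Ay)_i = a_{ii}y_i + \sum_{j\neq i} a_{ij}y_j$ and apply the reverse triangle inequality to get $|(Ay)_i| \geq |a_{ii}|\,|y_i| - \sum_{j\neq i}|a_{ij}|\,|y_j|$. Summing over $i$ and then swapping the order of summation in the double term on the right, the dummy indices regroup so that the coefficient of $|y_j|$ becomes exactly $|a_{jj}| - \sum_{i\neq j}|a_{ij}|$. By the definition of $\delta$, each such coefficient is bounded below by $\delta$, so the sum is at least $\delta \sum_j |y_j| = \delta\,\|y\|_1$, which is the required estimate.

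The main subtlety I anticipate is the mild asymmetry between the hypothesis, which asserts row diagonal dominance $|a_{ii}| > \sum_{j\neq i}|a_{ij}|$, and the quantity $\delta$, whose positivity is formulated via column sums $\sum_{i\neq j}|a_{ij}|$. The computation above uses only the column-based defect $\delta>0$, so I would make the point up front that the positivity of $\delta$ asserted in the statement is exactly what powers the $\ell^1$ estimate (whereas the row-dominance hypothesis would give, by an entirely parallel argument, a corresponding bound in the $\ell^\infty$-norm). With the key estimate in hand, invertibility follows because $Ay=0$ would force $\delta\,\|y\|_1 \leq 0$ and hence $y=0$; and choosing $y = A^{-1}b$ gives $\|A^{-1}b\|_1 \leq (1/\delta)\,\|b\|_1$ for every $b$, so $\|A^{-1}\|_1 \leq 1/\delta$ by taking the supremum over $b$ with $\|b\|_1 = 1$.
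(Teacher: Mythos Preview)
The paper does not prove this result; it simply quotes it as Theorem~4.1.2 of Golub and moves on. So there is no in-paper argument to compare against, and your proof stands on its own.

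Your argument is correct. The chain
\[
\|Ay\|_1 \;\geq\; \sum_i |a_{ii}|\,|y_i| \;-\; \sum_i\sum_{j\neq i}|a_{ij}|\,|y_j|
\;=\; \sum_j \Bigl(|a_{jj}| - \sum_{i\neq j}|a_{ij}|\Bigr)|y_j|
\;\geq\; \delta\,\|y\|_1
\]
is valid (the first inequality is the reverse triangle inequality applied termwise, and summing lower bounds---even possibly negative ones---is legitimate), and the deductions of invertibility and of $\|A^{-1}\|_1\le 1/\delta$ from it are immediate.

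You are also right to flag the row/column asymmetry, and it is worth being sharper about it: strict \emph{row} dominance does not in general force the column-based defect $\delta$ to be positive. For instance
\[
A=\begin{pmatrix}3&1&1\\2&3&0\\2&0&3\end{pmatrix}
\]
is strictly row diagonally dominant, yet the off-diagonal column sum in the first column is $4>3$, so $\delta<0$. Your computation uses only $\delta>0$, and in fact that hypothesis alone already gives invertibility (apply the row-dominance argument to $A^{T}$). Hence the cleanest reading of the statement is to treat the clause ``$\delta>0$'' as an additional hypothesis rather than as a consequence of row dominance; I would recommend saying this outright at the start of your proof rather than relegating it to a parenthetical remark about a ``subtlety''.
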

	
We next recall an important result due to Rosinger~\cite{Rosinger} about the equivalence on convergence and stability of systems of nonlinear evolution equations:
	
Let $(X,\|\cdot\|)$ be a normed vector space  and let us consider the nonlinear evolution equation
\begin{equation}\label{ProblemNonLinear} 
	\left\{
	\begin{array}{c} 
		\displaystyle 
		\frac{d}{dt}u(t) = A\bigl(u(t)\bigr), \ t \in [0,T] \\
		u(0) = f
	\end{array} 
	\right.
\end{equation}
where $A: X_1 \to X$, $X_1 \subset X$ is a set, and $u(t), f \in X_1$. The elements of $X$ are functions of a variable $x \in {\mathbb R}^{m}$, with $f = f(x)$ and $u(t) = F(x,t)$.
	
\begin{theorem}(Rosinger~\cite{Rosinger}) \label{ThmR}
Suppose that the problem (\ref{ProblemNonLinear}) is properly posed and we assume that a difference scheme which is consistent with the problem is proposed. Then the difference scheme is convergent to the solution of the problem (\ref{ProblemNonLinear}) if and only if it is stable. 
\end{theorem}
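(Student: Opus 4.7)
The plan is to establish the two implications separately, following the template of the classical Lax--Richtmyer proof but replacing linearity by a uniform continuity hypothesis on the discrete advancement operators. Let $S_{h}: X_{1}\to X_{1}$ denote the one-step numerical operator of the scheme at discretization scale $h=(\Delta t,\Delta x)$, so that $u^{h}_{n+1}=S_{h}(u^{h}_{n})$ with $u^{h}_{0}=f$, and let $u(t)$ be the exact solution of (\ref{ProblemNonLinear}). Stability in the nonlinear sense of Rosinger should be read as: the family $\{S_{h}^{n}\}$ is equicontinuous on the relevant bounded subset of $X_{1}$, uniformly in $h$ and in $n$ with $n\Delta t\le T$; consistency is the pointwise statement that the local truncation error $\tau_{n}^{h}=u(t_{n+1})-S_{h}(u(t_{n}))$ satisfies $\|\tau_{n}^{h}\|/\Delta t\to 0$ as $h\to 0$.

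For the direction \emph{stability $+$ consistency $\Rightarrow$ convergence}, I would first introduce the global error $e_{n}^{h}=u^{h}_{n}-u(t_{n})$ and use a telescoping identity writing $e_{n}^{h}$ as a sum
\begin{equation*}
e_{n}^{h}=\sum_{k=0}^{n-1}\bigl(S_{h}^{n-1-k}(u^{h}_{k+1})-S_{h}^{n-k}(u^{h}_{k})\bigr)+\bigl(S_{h}^{n}(f)-u(t_{n})\bigr),
\end{equation*}
regroup so that each summand involves the one-step defect $S_{h}(u(t_{k}))-u(t_{k+1})=-\tau_{k}^{h}$, and then apply the equicontinuity of the iterates $S_{h}^{n-1-k}$ to propagate the small local errors forward. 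A discrete Gronwall inequality, together with consistency, then forces $\|e_{n}^{h}\|\to 0$ uniformly for $n\Delta t\le T$.

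For the converse \emph{convergence $\Rightarrow$ stability}, the linear proof invokes Banach--Steinhaus, which is unavailable here; this is the main obstacle. The plan is to argue by contraposition: assuming failure of stability, construct a sequence of initial data $f_{j}\in X_{1}$ with $f_{j}\to f$ and indices $n_{j}$ such that $\|S_{h_{j}}^{n_{j}}(f_{j})-S_{h_{j}}^{n_{j}}(f)\|$ does not tend to zero along a suitable subsequence. Using convergence of the scheme applied to each $f_{j}$ and to $f$ separately, together with the continuous dependence on initial data guaranteed by the well-posedness of (\ref{ProblemNonLinear}), one obtains that the discrete orbits must approach the exact orbits $u_{f_{j}}(t)$ and $u_{f}(t)$ respectively; the instability assumption then contradicts this two-sided approximation.

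The hard part will be pinning down the precise nonlinear stability notion so that the equicontinuity of $\{S_{h}^{n}\}$ actually substitutes for the uniform operator norm bound of the linear theory; once that formulation is fixed, both directions reduce to careful accounting with the triangle inequality and discrete Gronwall. For our intended use in the present paper only the first implication is needed, so the quantitative error estimate produced in the convergence step is what we will ultimately invoke when applying Theorem~\ref{ThmR} to the implicit Euler scheme for (\ref{Eq01}).
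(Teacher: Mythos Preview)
The paper does not prove Theorem~\ref{ThmR}; it is quoted as a result of Rosinger and used only as a black box (its sole appearance is the one-line proof of Corollary~\ref{Cor}). There is therefore no proof in the paper to compare your proposal against.

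As to the proposal itself: the telescoping identity you wrote is vacuous. Since $u_{k}^{h}=S_{h}^{k}(f)$, every term satisfies $S_{h}^{n-1-k}(u_{k+1}^{h})=S_{h}^{n-k}(u_{k}^{h})=u_{n}^{h}$, so the sum is identically zero and you have merely restated $e_{n}^{h}=u_{n}^{h}-u(t_{n})$. The decomposition you want inserts the \emph{exact} values $u(t_{k})$ rather than the numerical iterates:
\[
u(t_{n})-S_{h}^{n}(f)=\sum_{k=0}^{n-1}\Bigl(S_{h}^{\,n-1-k}\bigl(u(t_{k+1})\bigr)-S_{h}^{\,n-1-k}\bigl(S_{h}(u(t_{k}))\bigr)\Bigr),
\]
after which each summand is controlled by applying the equicontinuity of $S_{h}^{\,n-1-k}$ to the pair $u(t_{k+1})$ and $S_{h}(u(t_{k}))=u(t_{k+1})-\tau_{k}^{h}$. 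With that correction the forward implication proceeds as you outline. For the converse, your contraposition sketch is reasonable in spirit, but whether it closes depends entirely on the precise nonlinear stability notion adopted; in Rosinger's framework the definitions are engineered so that this direction works, and recovering them independently is the real content of the theorem.
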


\section{Numerical Method}
	
After reviewing some concepts in the previous section, we here present a numerical approach to obtain solutions to the nonlinear diffusion equation (\ref{Eq01}), based on finite differences. The Finite Difference Method is one of the techniques for searching numerical solutions for partial differential equations (PDEs) as well as fractional partial differential equations (FPDEs). In the case of integer order, the theory is well-known (see, for instance, \cite{Strikwerda}) and it is based on a suitable discretization of the equations on a grid of points (or mesh) in the domain. In this note, for simplicity, we deal with a one-dimensional case. For linear equations with derivatives of fractional order, some methods were available (see \cite{podlubny}, \cite{Li}, \cite{Karniadakis}; see also the review paper  \cite{Li2018}). Here, we adapt to the nonlinear case the implicit Euler method with shifted Grünwald formula given by Theorem 2.7 in Ref.~\cite{meerschaert}.
	
In the following, we consider that $k$ and $h$ are positive real numbers, called {\it time-step} and {\it space-step}, respectively. The exact solution $u(x,t)$, when evaluated in the grid point $(x_j,t^n)$, is denoted by $u(x_j,t^n)$; more briefly ${u}_{j}^{n}$. We consider a mesh with $0 \leq j \leq M$ and  $0 \leq n \leq N$. The boundary values of the domain are $x_0 = L$ and $x_M = R$, and $t^N = t$ denotes the final time. When numerical solutions are considered, we utilize an approximation for the exact solution $u$ in the grid points, according to the notation $v(x_j,t^n)$, or ${v}_{j}^{n}$. When we consider Eq.~(\ref{Eq01}) in the grid points we look at the system of discretized equations:
\begin{eqnarray} \label{Eq09}
	\frac{\partial }{\partial t}v(x_j,t^n) 
	& = & 
	c^2 \frac{\partial^{\alpha} }{\partial {|x|}^{\alpha}} {v}^{\nu}(x_j,t^n) \; .
\end{eqnarray}
The problem is to find a suitable discretization for each term of Eq.~(\ref{Eq09}) in order to assure the convergence of the numerical method proposed here. As it is known, it is not easy to find a simple approach to address this problem. In this light, our main contribution is to propose a simple and reliable numerical method to solve nonlinear fractional partial equations, which are represented by Eq.~(\ref{Eq09}).
	
We start by assuming that $\nu > 0$ and $\delta > -1$ such as $\nu = 1 + \delta$. After this, we multiply the Eq.~(\ref{Eq01}) by $(1+\delta) {u}^{\delta}(x,t)$:
\begin{eqnarray*} \label{Eq2.10}
	(1+\delta) {u}^{\delta}(x,t)  \frac{\partial }{\partial t} {u}(x,t)
	& = &
	(1+\delta) {u}^{\delta}(x,t)  c^2
	\frac{\partial^{\alpha} }{\partial {|x|}^{\alpha}}{u}^{\nu}(x,t) \, ,
\end{eqnarray*}
yielding
\begin{eqnarray} \label{Eq11}
	\frac{\partial }{\partial t} {u}^{\nu}(x,t)
	&=&
	\nu \, {u}^{\delta}(x,t) \, c^2 \frac{\partial^{\alpha} }{\partial {|x|}^{\alpha}} {u}^{\nu}(x,t) \, .
\end{eqnarray}
	
%\textcolor{blue}{Note that the functions on which the derivatives act in the nonlinear equation (\ref{Eq11}) are equal, hence the argument for the linear case in Theorem 2.7 of \cite{meerschaert} has a possibility to be adapted to our case, if in each step of time, we can properly evaluate the weights $\nu \, {u}^{\delta}(x,t)$. }
	
In the grid points, we denote the weights $\nu \, {v}^{\delta}(x_j,t^n)$ only by $\delta_j^n := \nu \, {({v}^{\delta})}_j^n = \nu \, {v}^{\delta}(x_j,t^n)$. 
	
In the following, as our main result, we propose a convergent implicit Euler method to solve Eq.~(\ref{Eq09}), and an approach to evaluate the weights $\delta_j^n$.
	
\begin{theorem}\label{Thm2}
The implicit Euler method
\begin{eqnarray} \label{Eq12}
	-\lambda {\delta}_{j}^{n+1} {w}_{0}^{\alpha} {(v^{\nu})}_{j+1}^{n+1} + (1 - \lambda {\delta}_{j}^{n+1} {w}_{1}^{\alpha}) {(v^{\nu})}_{j}^{n+1}
	- \lambda {\delta}_{j}^{n+1} \sum_{i = 2}^{j+1} w_{i}^{\alpha} {(v^{\nu})}_{(j+1)-i}^{n+1} 
	& = &  {(v^{\nu})}_{j}^{n} \; ,
\end{eqnarray}
where $\lambda = \frac{k c^2}{h^{\alpha}}$ and $\delta_j^n = \nu \, {({v}^{\delta})}_j^n$, to solve the Eq.~(\ref{Eq11}) with $1 < \alpha \leq 2$, $\nu > 0$, on the finite domain $L \leq x \leq R$, together with a non-negative bounded initial condition ${u}(x,0) = {u}_{0}$ and boundary conditions $u(x = L,t) = 0 = u(x = R,t)$ for all $t \geq 0$, based on the shifted Grünwald approximation as given in (\ref{Eq06}), with $p = 1$ and $h = (R-L)/M$, is consistent and unconditionally stable.
\end{theorem}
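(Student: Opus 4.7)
The plan is to establish consistency and unconditional stability separately, and then invoke Theorem \ref{ThmR} (Rosinger) to obtain convergence for the nonlinear scheme. I would begin by rewriting the scheme in matrix form. Treating $W^{n+1}:=\bigl((v^{\nu})_{1}^{n+1},\ldots,(v^{\nu})_{M-1}^{n+1}\bigr)^{T}$ as the unknown and freezing the coefficient $\delta_{j}^{n+1}\ge 0$ (which is legitimate because $\nu>0$ and $u\ge 0$, and whose practical evaluation is handled by the iteration the authors mention), equation (\ref{Eq12}) together with the homogeneous Dirichlet boundary conditions becomes a linear system $A^{n+1}W^{n+1}=W^{n}$, where the entries of $A^{n+1}$ are built from the Grünwald weights $w_{i}^{\alpha}$ and the factors $\lambda\delta_{j}^{n+1}$.

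For unconditional stability I would check strict row diagonal dominance of $A^{n+1}$ and then apply Theorem \ref{Thm1}. The standard sign properties of the Grünwald weights for $1<\alpha\le 2$ are the key input: $w_{0}^{\alpha}=1>0$, $w_{1}^{\alpha}=-\alpha<0$, $w_{i}^{\alpha}\ge 0$ for $i\ge 2$, and $\sum_{i=0}^{\infty}w_{i}^{\alpha}=0$, so that $w_{0}^{\alpha}+\sum_{i\ge 2}w_{i}^{\alpha}=\alpha$. The diagonal entry of row $j$ equals $1-\lambda\delta_{j}^{n+1}w_{1}^{\alpha}=1+\lambda\delta_{j}^{n+1}\alpha$, while the sum of absolute values of off-diagonal entries is $\lambda\delta_{j}^{n+1}\bigl(w_{0}^{\alpha}+\sum_{i=2}^{j+1}w_{i}^{\alpha}\bigr)\le\lambda\delta_{j}^{n+1}\alpha$ (the truncation of a non-negative tail). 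Hence the difference is at least $1$, independently of $h$, $k$ and the nonlinear coefficient. Theorem \ref{Thm1} then gives $\|(A^{n+1})^{-1}\|_{1}\le 1$, and iterating yields $\|W^{n}\|_{1}\le\|W^{0}\|_{1}$ for every $n$, which is unconditional stability in the $\ell^{1}$ norm.

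For consistency I would evaluate the exact solution of (\ref{Eq11}) at the grid point $(x_{j},t^{n+1})$ and substitute the two discretizations used to build (\ref{Eq12}). The backward Euler formula gives $\partial_{t}u^{\nu}(x_{j},t^{n+1})=\bigl((u^{\nu})_{j}^{n+1}-(u^{\nu})_{j}^{n}\bigr)/k+O(k)$, and the shifted Grünwald formula (\ref{Eq06}) with $p=1$ (together with the theorem from \cite{meerschaert} cited in the statement) gives $\partial^{\alpha}_{|x|}u^{\nu}(x_{j},t^{n+1})=h^{-\alpha}\sum_{i=0}^{j+1}w_{i}^{\alpha}(u^{\nu})_{j+1-i}^{n+1}+O(h)$. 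Substituting and multiplying by $k$, the local truncation error of (\ref{Eq12}) becomes $O(k^{2})+O(kh)$, so the truncation error per step is $O(k)+O(h)\to 0$ as $h,k\to 0$, which is consistency.

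Once consistency and unconditional stability are in hand, Theorem \ref{ThmR} finishes the argument: the properly posed nonlinear evolution problem (\ref{Eq11})--(\ref{Eq01}) with the stated initial and boundary data, together with the consistent and stable scheme (\ref{Eq12}), is convergent. The step I expect to be most delicate is the stability argument, because the coefficient $\delta_{j}^{n+1}$ sitting in the matrix makes $A^{n+1}$ both row-dependent and solution-dependent; the crucial observation that makes this harmless is that $\delta_{j}^{n+1}\ge 0$ factors out of all the off-diagonal terms exactly as it appears in the diagonal perturbation, so diagonal dominance is preserved with a uniform gap of $1$, irrespective of the magnitude of $\delta_{j}^{n+1}$ and hence of $\lambda$. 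This is precisely what produces \emph{unconditional} stability in the nonlinear setting.
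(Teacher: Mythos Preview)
Your argument is correct and follows essentially the same path as the paper: consistency from the $O(k)$ backward-Euler error plus the $O(h)$ shifted-Gr\"unwald error (the latter via Theorem~2.4 of \cite{meerschaert}, exactly as the paper does), and stability from the sign pattern $w_{0}^{\alpha}>0$, $w_{1}^{\alpha}=-\alpha<0$, $w_{i}^{\alpha}\ge 0$ for $i\ge 2$, $\sum_{i}w_{i}^{\alpha}=0$, which yields strict row diagonal dominance of the step matrix with a uniform gap of~$1$. The only noteworthy difference is in how stability is extracted from this dominance. The paper invokes Theorem~\ref{Thm1} merely for invertibility and then runs a separate Gershgorin-type eigenvalue computation to show that every eigenvalue of $A$ satisfies $\widetilde{\lambda}\ge 1$, whence the spectral radius of $A^{-1}$ is at most~$1$; you instead feed the dominance gap directly into Theorem~\ref{Thm1} to get $\|(A^{n+1})^{-1}\|\le 1$ and iterate. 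Your route is shorter and avoids the paper's tacit assumption that the eigenvalues are real. One small caveat: as stated, Theorem~\ref{Thm1} bounds $\|A^{-1}\|_{1}$ in terms of \emph{column} sums, whereas what you computed is the row-dominance gap; the clean conclusion from your calculation is $\|(A^{n+1})^{-1}\|_{\infty}\le 1$ and hence unconditional stability in $\ell^{\infty}$ rather than $\ell^{1}$, which is harmless for the result.
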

	
\begin{proof}
In this proof we adapt the argument used in the proof of Theorem $2.7$ of \cite{meerschaert}.
		
Initially, since we put a left boundary condition equal to zero, it is possible to extend $u(x,t)$ by 
$u(x,t) = 0$ for all $x < L$, $t > 0$, and the Riemann-Liouville fractional derivative (or Riesz derivative) in  (\ref{Eq11}) can be replaced by the Liouville fractional derivative of order $\alpha$ defined in Eq.~(\ref{Eq03e}). 
The Liouville fractional derivative can be approximated by the shifted Grünwald formula given in Eq.~(\ref{Eq06}). From Theorem~2.4 of \cite{meerschaert}, it follows that the order of accuracy of such approximation is $O(h)$. Therefore, the implicit method (\ref{Eq12}) is consistent with  Eq.~(\ref{Eq11}), with accuracy order $O(h)+O(k)$. 
		
Replacing in Eq.~(\ref{Eq11}) the adequate discretizations (backward difference in time-derivative and shifted Grünwald with $p = 1$ in fractional space-derivative of Riemann-Liouville or Riesz type), we obtain
\begin{eqnarray*} \label{Eq13}
	\frac{{(v^{\nu})}_{j}^{n+1} - {(v^{\nu})}_{j}^{n}}{k}
	&=&
	{\delta}_{j}^{n+1} c^2 h^{-\alpha} \sum_{i = 0}^{j+1} w_{i}^{\alpha} {(v^{\nu})}_{(j+1)-i}^{n+1} \; ,
\end{eqnarray*} where $\delta_j^n = \nu \, {({v}^{\delta})}_j^n$, with $\nu = 1 + \delta$ and $\delta > -1$. 
By considering $\lambda = k c^2 h^{-\alpha}$, we have
\begin{eqnarray} \label{Eq14}
	{(v^{\nu})}_{j}^{n+1} - {(v^{\nu})}_{j}^{n}
	&=&
	\lambda \, {\delta}_{j}^{n+1} \sum_{i = 0}^{j+1} w_{i}^{\alpha} {(v^{\nu})}_{(j+1)-i}^{n+1} \; ;
\end{eqnarray}
after a simple rearrangement of the terms in Eq.~(\ref{Eq14}) we obtain Eq.~(\ref{Eq12}), which is a linear system of the type ${A} {({v^{\alpha}})}^{n+1} = {({v^{\alpha}})}^{n}$, where ${A} = [{A}_{i,j}]$ is the matrix of coefficients that is the sum of a lower triangular with a super-diagonal matrix, described by 
\begin{eqnarray*}%\label{Eq14a}
	{A}_{0,j}
	& = & 
	\left\{
	\begin{array}{lcl}
		1, & if & j = 0 \ ; \\
		0, & if & 1 \leq j \leq M \ ,
	\end{array}
	\right. \\
	%\label{Eq14b}
	{A}_{i,j}
	& = & 
	\left\{
    \begin{array}{lcr}
		0, & \text{if} & j \geq i + 2
		\ \ \text{ for } \ 1 \leq j \leq M-1 \; ;  \\
		-\lambda \, {\delta}_{i}^{n+1} \, {w}_{0}^{\mu} & \text{if} & j = i + 1 \ , 
		\ \ \text{ for } \ 1 \leq j \leq M-1 \; ; \\
		1 - \lambda \, {\delta}_{i}^{n+1} \, {w}_{1}^{\mu} & \text{if} & j = i \ ,  
		\ \ \text{ for } \ 1 \leq j \leq M-1 \; ; \\
		-\lambda \, {\delta}_{i}^{n+1} \, {w}_{i}^{\mu} & \text{if} & j \leq i - 1 \; ,
		\ \ \text{ for } \ 1 \leq j \leq M-1 \; ,
	\end{array}
	\right. \\
	%\label{Eq14c}
	{A}_{M,j}
	& = & 
	\left\{
	\begin{array}{lcl}
		0, & if & 0 \leq j \leq M-1 \ ; \\
		1, & if & j = M \ .
	\end{array}
	\right.
\end{eqnarray*}
%$A_{0,0} = 1$ and $A_{0,j} = 0$ for $j = 1,2,\cdots,N$ and $A_{N,N} = 1$ and $A_{N,j} = 0$ for $j = 0,1,\cdots,N-1$.
%\begin{eqnarray*} \label{Eq15}
%{A}_{i,j}
% & = & 
% \left\{
%  \begin{array}{rcl}
%   0, & if & j \geq i + 2 \ ; \\
%   -\lambda \, {\delta}_{i}^{n+1} \, {w}_{0}^{\alpha}, & if & j = i + 1 \ ; \\
%   1 - \lambda \, {\delta}_{i}^{n+1} \, {w}_{1}^{\alpha}, & if & j = i \ ; \\
%   -\lambda \, {\delta}_{i}^{n+1} \, {w}_{i}^{\alpha}, & if & j \leq i - 1 \ ,
%  \end{array}
% \right.
%\end{eqnarray*}
%while $A_{0,0} = 1$, $A_{0,j} = 0$ for $j = 1,2,\cdots,N$, and $A_{N,N} = 1$, $A_{N,j} = 0$ for $j = 0,1,\cdots,N-1$.  
		
To verifying that $A$ is invertible, we initially take $z = -1$ in the Binomial formula ${(1+z)}^{\alpha} = \sum_{k=0}^{\infty} {{\alpha}\choose{k}} z^k$ and conclude that $\sum_{k = 0}^{\infty} {w}_{k}^{\alpha} = 0$. 
Since $0 \leq u_0 < \infty$, we conclude easily that $\displaystyle \max_{0\leq i \leq N}\{|\delta_i^n|\} < \infty$; therefore, it follows that $\displaystyle \sum_{k = 0}^{\infty} {\delta}_{i}^{n+1} {w}_{k}^{\alpha} = 0$ for all $i \in \{0,1,\cdots,M\}$. Moreover, since $1 < \alpha \leq 2$, the unique negative term in the sequence ${\bigl\{{w}_{k}^{\alpha}\bigr\}}_{k \in \mathbb{N}}$ is
${w}_{1}^{\alpha} = - \alpha$, and because $\delta > -1$, we have $\delta_i^n \geq 0$ for all $i \in \{0,1,\cdots,M\}$. 
Therefore,
\begin{eqnarray} \label{Eq16}
	- {\delta}_{i}^{n+1} {w}_{1}^{\alpha} = \sum_{j = 0, j \neq 1}^{\infty} {\delta}_{i}^{n+1} w_{j}^{\alpha}  
	\quad 
	\implies 
	\quad 
	- {\delta}_{i}^{n+1} {w}_{1}^{\alpha} 
	\geq  \sum_{j = 0, j \neq i}^{i+1} {\delta}_{i}^{n+1} w_{i-j+1}^{\alpha} \; .
\end{eqnarray}
This implies that
\begin{eqnarray} \label{Eq17}
	1 - {\delta}_{i}^{n+1} {w}_{1}^{\alpha} 
	\geq  \sum_{j = 0, j \neq i}^{i+1} {\delta}_{i}^{n+1} w_{i-j+1}^{\alpha} \; .
\end{eqnarray}
		
The inequality in Eq.~(\ref{Eq17}) means that the matrix $A$ is strictly  diagonal dominant by rows; hence, from Theorem~\ref{Thm1}, $A$ is invertible. 
On the other hand, let us consider $\widetilde{\lambda}$ as an eigenvalue of $A$.
If we choose $i$ such that $\displaystyle |y_i| = \max_{0 \leq j \leq M}\{|y_j|\}$, it follows that $\displaystyle \sum_{j = 0}^{M} A_{i,j} y_j = \widetilde{\lambda} y_i$, and, therefore,
\begin{eqnarray} \label{Eq18}
	\widetilde{\lambda} & = & 
	A_{i,i} + \sum_{j = 0, j \neq i}^{M} A_{i,j} \frac{y_j}{y_i} \; .
\end{eqnarray}
Now, if $i = 0$ or $i = M$, we have $\widetilde{\lambda} = 1$; otherwise, replacing the values of $A_{i,j}$ in Eq.~(\ref{Eq18}) we obtain
\begin{eqnarray*} \label{Eq19}
	\widetilde{\lambda} & = & 
	(1 - \lambda {\delta}_{i}^{n+1} {w}_{1}^{\alpha}) - \lambda {\delta}_{i}^{n+1} {w}_{0}^{\alpha} \frac{y_{i+1}}{y_i}
	- \lambda \sum_{j = 0}^{i-1} {\delta}_{i}^{n+1} w_{i-j+1}^{\alpha} \frac{y_{j}}{y_{i}} \; ,
\end{eqnarray*}
or, after a rearrangement of the terms: 
\begin{eqnarray} \label{Eq20}
	\widetilde{\lambda} & = & 
	1 - \lambda \Bigl( {\delta}_{i}^{n+1} {w}_{1}^{\alpha} + \sum_{j = 0, j \neq i}^{i+1} {\delta}_{i}^{n+1} w_{i-j+1}^{\alpha} \frac{y_{j}}{y_{i}} \Bigr) \; .
\end{eqnarray}
Since we choose $i$ such that $\bigl|\frac{y_j}{y_i}\bigr| \leq 1$, for all $0 \leq j \leq N$, by considering (\ref{Eq16}) and (\ref{Eq20}), we conclude that $\widetilde{\lambda} \geq 1$, for every $\widetilde{\lambda}$ eigenvalue of $A$. 
Thus, each  eigenvalue $\widetilde{\widetilde{\lambda}}$ of $A^{-1}$ satisfies $\widetilde{\widetilde{\lambda}} \leq 1$. Since the spectral radius of $A^{-1}$ is smaller than or equal to 1, it follows than an error $\varepsilon_{0}$ in $v^{0} \approx {u_0}$ results in an error $\varepsilon_{1}$ in $v^{1}$ less than or equal to $\varepsilon_0$ , and so on. This fact means that the error $\varepsilon_n$ in the $n$-step is bounded by the initial error $\varepsilon_{0}$, hence, the implicit method (\ref{Eq12}) is unconditionally stable. 
\end{proof}
	
\begin{corollary}\label{Cor}
The implicit Euler method (\ref{Eq12}) is convergent.
\end{corollary}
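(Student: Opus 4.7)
The plan is to derive Corollary~\ref{Cor} as an immediate consequence of Rosinger's Theorem~\ref{ThmR}. Theorem~\ref{Thm2} already supplies the two standing hypotheses of that equivalence result: the implicit Euler scheme~(\ref{Eq12}) is consistent with Eq.~(\ref{Eq11}), with local truncation error of order $O(h)+O(k)$, and it is unconditionally stable, in view of the spectral bound on $A^{-1}$ established there. The only remaining task is therefore to place Eq.~(\ref{Eq11}) into the abstract framework of~(\ref{ProblemNonLinear}) and to invoke well-posedness of the continuous problem.

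First I would fix a normed function space $X$ on $[L,R]$ incorporating the homogeneous Dirichlet boundary conditions (for instance, a suitable subspace of $C([L,R])$ equipped with the supremum norm, which is compatible with the pointwise error control used in the stability argument of Theorem~\ref{Thm2}), take $X_{1}\subset X$ to be the natural subset on which the fractional derivative in $x$ of the power $u^{\nu}$ is well defined, and set
\begin{equation*}
A(u) \;:=\; c^{2}\,\frac{\partial^{\alpha}}{\partial |x|^{\alpha}} u^{\nu}, \qquad f := u_{0}.
\end{equation*}
With these identifications, Eq.~(\ref{Eq01}) together with the non-negative bounded initial datum $u(x,0)=u_{0}$ and boundary data $u(L,t)=u(R,t)=0$ is precisely the abstract Cauchy problem~(\ref{ProblemNonLinear}). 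One should also note that the manipulation by which~(\ref{Eq01}) was rewritten as~(\ref{Eq11}), namely multiplication by $(1+\delta)u^{\delta}$, is reversible on non-negative solutions, so convergence of the scheme for~(\ref{Eq11}) is equivalent to convergence for~(\ref{Eq01}).

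Next I would invoke well-posedness of the continuous problem under the running assumptions $1<\alpha\leq 2$, $\nu>0$, and non-negative bounded initial datum, as guaranteed by the literature on nonlinear fractional diffusion cited in the introduction (see~\cite{Book2018,Li2018} and the references therein). With the hypothesis of Theorem~\ref{ThmR} then verified, the Rosinger equivalence ``consistency $+$ stability $\iff$ convergence'' applies and, in combination with Theorem~\ref{Thm2}, directly yields convergence of~(\ref{Eq12}) to the exact solution $u(x_{j},t^{n})$, which is the statement of Corollary~\ref{Cor}.

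The only genuinely nontrivial step in this plan is the invocation of well-posedness of the continuous nonlinear fractional PDE in an appropriate normed space; this is quoted from the cited references rather than reproved. Everything else is bookkeeping needed to match Rosinger's abstract framework, after which the corollary is a one-line concatenation of Theorem~\ref{Thm2} and Theorem~\ref{ThmR}.
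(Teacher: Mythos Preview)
Your proposal is correct and follows exactly the same route as the paper, which simply writes ``Apply Theorem~\ref{ThmR}''; you have merely spelled out the bookkeeping (choice of normed space, identification of $A$ and $f$, well-posedness) that the paper leaves implicit. No substantive difference in method.
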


\begin{proof}
Apply Theorem~\ref{ThmR}.
\end{proof}
	
\begin{remark}
Note that one cannot apply directly the implicit Euler method (\ref{Eq12}) given in Theorem~\ref{Thm2}, because the weights ${\delta}_{j}^{n+1}$ are considered in the ($n+1$)-step, inserting, in this manner, the unknown values of the function $v$ in the ($n+1$)-step inside the coefficients of $A$. In other words, to apply the method (\ref{Eq12}), it is necessary to compute, in each time-step, the value of the weight.
%Remember that ${\delta}_{j}^{n+1} = {\nu} \, {v}^{\delta}(x_{j},t^{n+1})$, with $\delta \geq 0$.
\end{remark}
	
Next, we propose a form of iteration that allows us to implement the convergent method (\ref{Eq12}) by solve numerically Eq.~(\ref{Eq11}).
We start by evaluate the weights ${\delta}_{j}$ to the next time-step. The auxiliary scheme to be solved is: 
\begin{eqnarray} \label{Eq21}
	-\lambda {\delta}_{j}^{n} {w}_{0}^{\alpha} {(v^{\nu})}_{j+1}^{n+1} + (1 - \lambda {\delta}_{j}^{n} {w}_{1}^{\alpha}) {(v^{\nu})}_{j}^{n+1}
	- \lambda {\delta}_{j}^{n} \sum_{i = 2}^{j+1} w_{i}^{\alpha} {(v^{\nu})}_{(j+1)-i}^{n+1} 
	& = &  {(v^{\nu})}_{j}^{n} \; .
\end{eqnarray}
In the auxiliary scheme, for each time-step, all the  coefficients of $A$ are known, then $A^{-1}$ can be computed.
	
The algorithm is: 
	
\hspace{1cm} - solve the system in Eq. (\ref{Eq21});
	
\hspace{1cm} - compute ${\delta}_{j}^{n+1}$;
	
\hspace{1cm} - return a step in time and solve the system displayed in Eq.~(\ref{Eq12}). 
	
\begin{remark}
Theorem~\ref{Thm2} can be easily adapted to the equation of advection-diffusion Eq.~(\ref{Eq02}): it is sufficient to discretize implicitly the advective term utilizing the backward difference operator, and then evaluate the font term $f(x,t)$ in the grid points. The discretization of the advective term produces a new positive term in the coefficients $A_{i,i}$, and also yields a new negative term in the coefficients $A_{i,i-1}$ (both without weights $\delta_{i}^{n+1}$) keeping, therefore, the matrix $A$ invertible. 
\end{remark}
	
\begin{remark}
In the case of $\alpha = 2$, the shifted Grünwald–Letnikov derivative formula~(\ref{ShiG}) agrees with the second-order centered difference operator. Therefore, the method presented in this note also holds for equations (\ref{Eq01}) and (\ref{Eq02}) with (the usual) integer order derivative.
\end{remark}
	
\section{Example of Application}
	
Several diffusive phenomena in nature are satisfactory modeled by the Fokker-Plank linear equation
\begin{eqnarray} \label{Eq22}
	\frac{\partial }{\partial t} P(x,t) = D \frac{{\partial}^{2}}{\partial x^2} P(x,t)  \; ,
\end{eqnarray}
where $P(x,t)$ is the density of probability in the $x$-space, $D > 0$ is the diffusion coefficient. One interesting point about the processes described by Eq.~(\ref{Eq22}) concerns the Markovian characteristics, which is underlined by the linear time dependence manifested by the mean square displacement, i.e., $\left \langle \left(x-\langle x \rangle \right)^2 \right \rangle \propto t$ (usual diffusion). However, many situations (see for example Refs.~\cite{Book2018,Tsallis,10.1080,C4CP03465A,book12472}) have shown a different behavior (e.g., anomalous diffusion) of those modeled by Eq.~(\ref{Eq22}). In order to face these scenarios, Eq.~(\ref{Eq22}) has been extended by incorporating fractional space-derivatives and nonlinear terms. It is worth mentioning that the space-fractional derivatives has been related to the L\'evy distributions and the nonlinear case to correlated-like diffusive processes (see for example Refs.\cite{Book2018,Tsallis}). In Ref. \cite{Tsallis} it is considered the one-dimensional equation 
\begin{eqnarray} \label{Eq23}
	\frac{\partial }{\partial t} P(x,t) = D \frac{{\partial}^{\gamma}}{\partial {x}^{\gamma}} {P}^{\nu}(x,t)  \; , 
	\quad -\infty < \gamma \leq 2, \, \quad \nu > -1, \, \quad x \in \mathbb{R}, \quad t > 0,
\end{eqnarray}
and, under some hypotheses, exact time-dependent solutions are exhibited to $\gamma$ in several subintervals of $\left(-\infty, 2\right]$.
	
In the following, we consider some important cases of Eq. (\ref{Eq23}) in a bounded domain (by simplicity, we put $D = 1$) and we utilize our algorithm to present some numerical solutions for the nonlinear initial-boundary value problem 
\begin{eqnarray} \label{Eq24}
	\left\{
	\begin{array}{l}
		\displaystyle \frac{\partial }{\partial t} u(x,t) = \frac{{\partial}^{\alpha}}{\partial {x}^{\alpha}} {u}^{\nu}(x,t)  \; , 
		\quad 1 < \alpha \leq 2, 
		\quad \nu > 0, 
		\quad 0 \leq x \leq 5, \, \\
		\displaystyle u(0,t) = 0 = u(5,t) \; , \quad \forall \ \ t > 0, \\
		\displaystyle 
		u(x,0) = u_0(x) \; , \quad \forall \ \ 0 \leq x \leq 5, 
	\end{array}
	\right.
\end{eqnarray}
where the initial data $u_0$ is the Gaussian function given by
\begin{eqnarray*} \label{Eq25}
	{u}_{0}(x) = \frac{2}{\sqrt{2 \pi {(0.3)}^{2}}} \, 
	\exp\left(\frac{-{(x - 2.5)}^{2}}{2 {(0.3)}^{2}}\right) \; .
\end{eqnarray*}
	
In Figure~\ref{fig2.3}, we present some numerical solutions to Problem (\ref{Eq24}) at $t = 1s$: in Figure~\ref{fig2.3.a}, we can see some linear cases ($\nu = 1$), and the effects to fractional space-derivative when $\alpha = 1.2$, $\alpha = 1.4$, $\alpha = 1.6$, $\alpha = 1.8$ and $\alpha = 2.0$ (the normal diffusion case displayed in Table 1). In Figure~\ref{fig2.3.b}, some nonlinear cases of Problem~(\ref{Eq24}) are presented, where $\alpha = 1.5$ and $\nu$ takes the values $\nu = 2.0$, $\nu = 1.5$, $\nu = 1.0$ (linear case), $\nu = 0.5$ and $\nu = 0.2$.
	
Since we want to compare our numerical solution with the exact solution of the Problem (\ref{Eq24}), we choose the well-known second-order linear equation as Eq.~(\ref{Eq22}). In Table~1, the exact values of the solution $u$ are obtained from the variable separation method; the numerical values $v$ are obtained from our algorithm, and the error considered is $E = v - u$. The time considered is $t = 1s$. 
	
%%%%%%%%%%% FIGURES %%%%%%%%%%%%
\begin{figure}[!htb]
\centering
    \subfloat[Numerical solutions for $\nu = 1.0$, \newline with $\alpha = 1.2$, $\alpha = 1.4$, $\alpha = 1.6$, $\alpha = 1.8$ and $\alpha = 2.0$.]{
		\includegraphics[height=5.2cm]{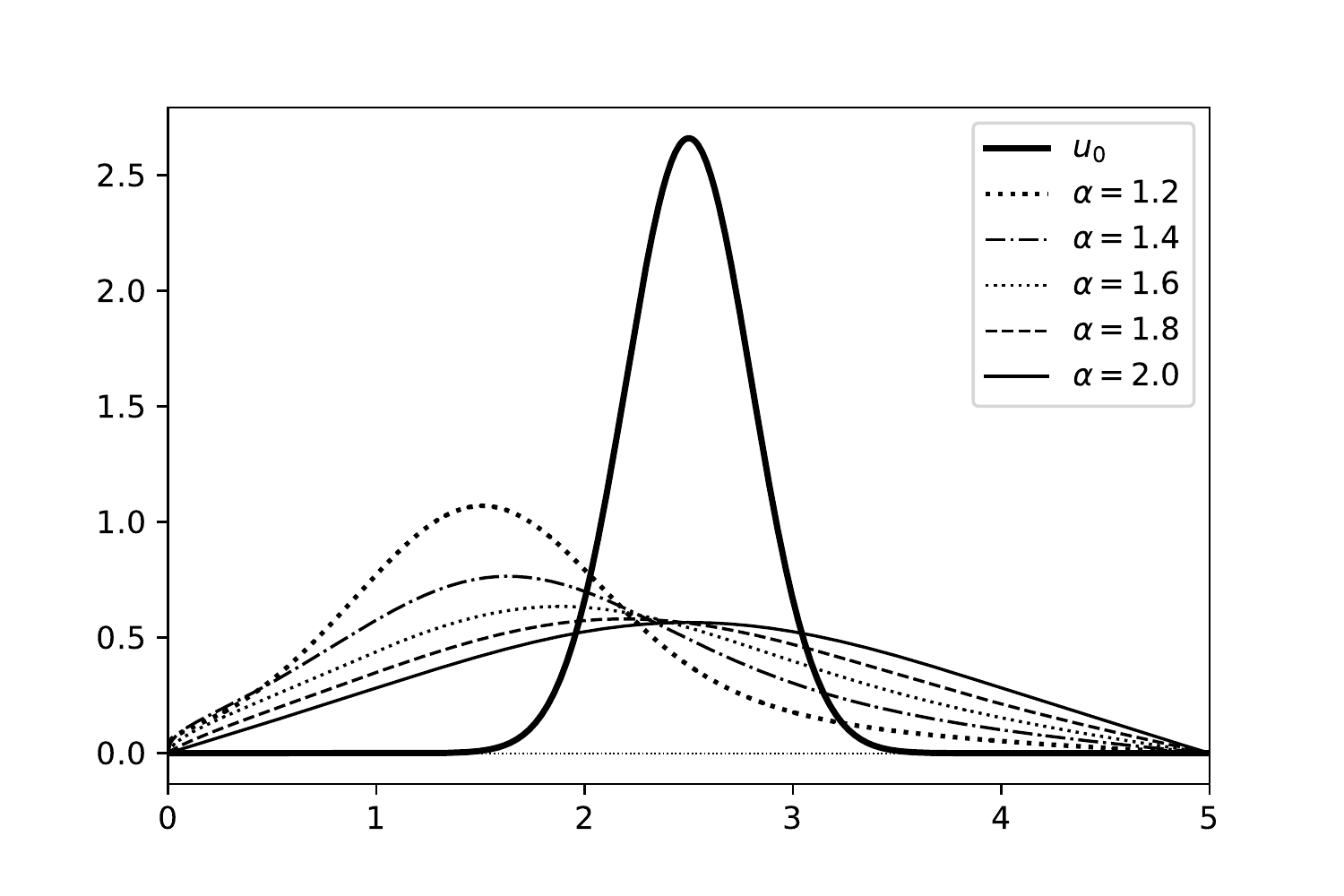}
		\label{fig2.3.a}
        }
    \ \ \ \ %espaco separador
    \subfloat[Numerical solutions for $\alpha = 1.5$, \newline with $\nu = 0.2$, $\nu = 0.5$, $\nu = 1.0$, $\nu = 1.5$ and $\nu = 2.0$.]{
		\includegraphics[height=5.2cm]{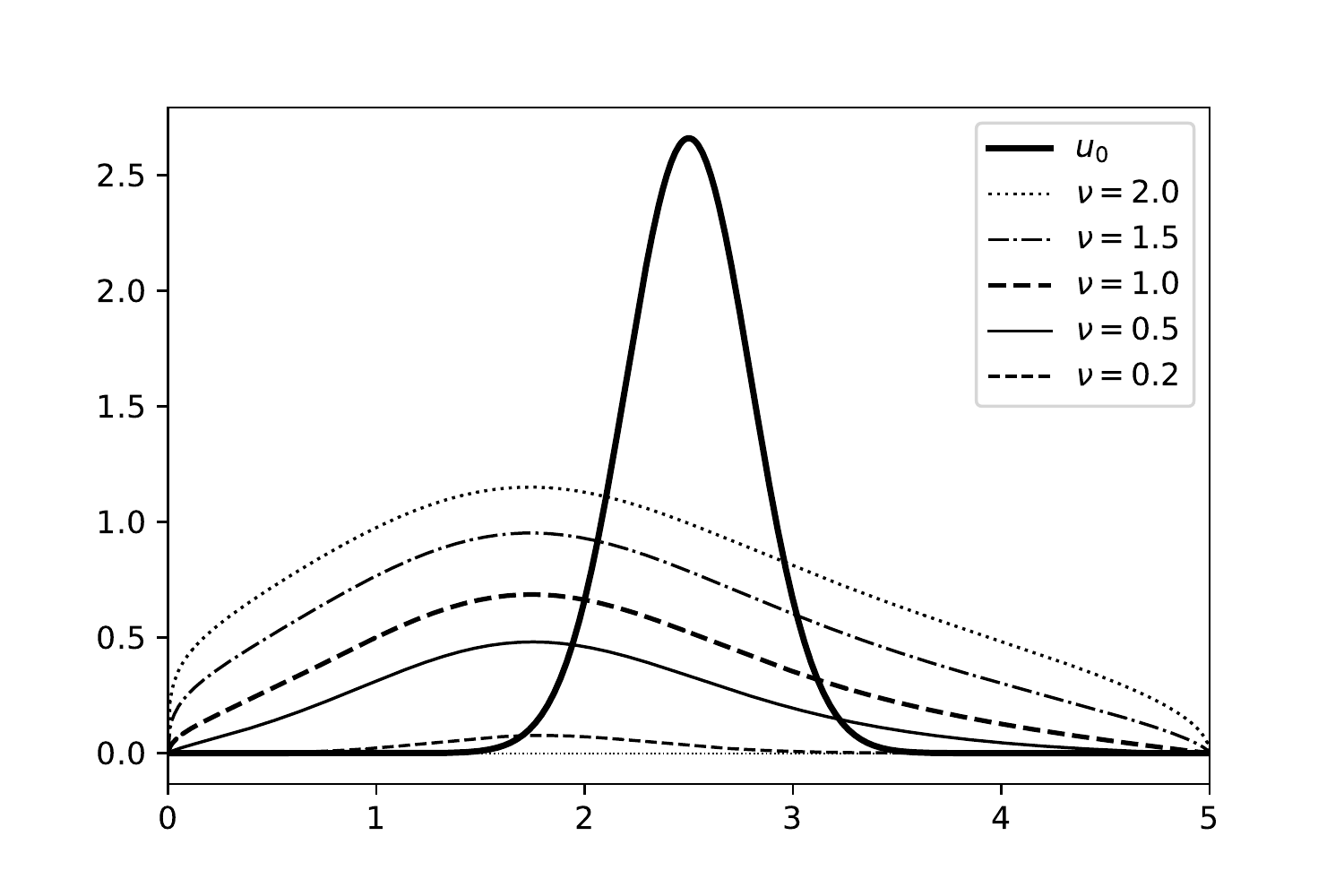}
		\label{fig2.3.b}
		}
	\caption{Numerical solutions at $t = 1$ for $\nu = 1.0$ and $\alpha$ variable, and for $\alpha = 1.5$ and $\nu$ variable.}
	\label{fig2.3}
\end{figure}
	
\begin{remark}
In Ref.~\cite{Tsallis}, in order to obtain the exact solution of Eq.~(\ref{Eq23}), the authors assume that $\nu = \frac{2 - \gamma}{1 + \gamma}$. In this sense, the unique linear case considered in Ref.~\cite{Tsallis} is when $\nu = 1$ and $\gamma = 0.5$ in Eq.~(\ref{Eq23}). Unfortunately,  such $\gamma$ is not compatible with Theorem~\ref{Thm2} proposed here, because $1 < \alpha \leq 2$. This is the reason why we consider Eq.~(\ref{Eq22}) to perform our comparison.
\end{remark}
	
\begin{table}[h]\label{tabela1}
	\center
	\small{ {\bf Table 1} - Values of numerical solution $v$, exact solution $u$, and error $E = v - u$, \\ for $\alpha = 2$ and $\nu = 1$ in Problem (\ref{Eq24}), at $t=1$.} \\
	\begin{tabular}{c|c|c|c}
		\hline \hline
		$x$   &    $v$    &     $u$      &   $E$    \\ \hline \hline
		0,0 \ \ & \ \ 0,00000000 \ \ & \ \ 0,00000000 \ \ & \ \ \ \ 0,00000000 \\ \hline
		0,5 \ \ & \ \ 0,13650936 \ \ & \ \ 0,14492708 \ \ & \ \ - 0,00841772 \\ \hline
		1,0 \ \ & \ \ 0,28017859 \ \ & \ \ 0,28987258 \ \ & \ \ - 0,00969399 \\ \hline
		1,5 \ \ & \ \ 0,41875250 \ \ & \ \ 0,41990747 \ \ & \ \ - 0,00115497 \\ \hline
		2,0 \ \ & \ \ 0,52474526 \ \ & \ \ 0,51329497 \ \ & \ \ \ \ 0,01145029 \\ \hline
		2,5 \ \ & \ \ 0,56558688 \ \ & \ \ 0,54801760 \ \ & \ \ \ \ 0,01756928 \\ \hline
		3,0 \ \ & \ \ 0,52629428 \ \ & \ \ 0,51462415 \ \ & \ \ \ \ 0,01167013 \\ \hline
		3,5 \ \ & \ \ 0,42130602 \ \ & \ \ 0,42221621 \ \ & \ \ \ \ 0,00208981 \\ \hline
		4,0 \ \ & \ \ 0,28306465 \ \ & \ \ 0,29267907 \ \ & \ \ - 0,00961442 \\ \hline
		4,5 \ \ & \ \ 0,13934058 \ \ & \ \ 0,14787562 \ \ & \ \ - 0,00853504 \\ \hline
		5,0 \ \ & \ \ 0,00000000 \ \ & \ \ 0,00000000 \ \ & \ \ \ \ 0,00273680 \\ \hline \hline
	\end{tabular}
	%\caption{Valores da solução exata $u$, da solução numérica $v$, e do erro $E$, para o tempo $t=1$} \label{tabela1}
\end{table}
	
Let us now analyze the CPU time and the conditions in which our method was applied. All computations were made in Phyton language, utilizing the Spyder software. The simulations presented in the Figure~\ref{fig2.3} were obtained with $ M = 500$ ($h = 0.01$) and $N = 10$ ($k = 0.1$), i.e., the desired solution at time $1$s was generated after $10$ time-steps iterations. In these conditions, the CPU time of our nonlinear method (\ref{Eq12}) was between 209 and 212 seconds for each simulation. Although we did not get any nonlinear method similar to our method to perform the comparison, we then compare our method with a linear one. For the cases in the Figure~\ref{fig2.3.a}, if the linear method based on \cite{meerschaert} is used, with the same conditions, the CPU time was around 20 seconds. It is interesting to note that in each time step of our nonlinear method, the algorithm inverts an $M \times M$ matrix (in our case, $M=500$), whereas in the linear method, a unique $M \times M$ matrix is necessary to invert. In this light, the CPU time of our nonlinear method is reasonable; this is the price to pay by the nonlinearity of the system.

\section{Final Remarks}
	
We have proposed a convergent numerical method based on finite differences to solve a class of nonlinear advection-diffusion fractional differential equation, which are utilized to model, for instance, the porous media as well as phenomena which present anomalous diffusion.  
%As future works, it will be interesting to investigate how to solve numerically FPDEs for $\nu \leq 0$ and $0 < \alpha < 1$. 
We hope that the results presented here be useful to discuss/solve nonlinear advection-diffusion fractional differential equations in connection with the anomalous diffusion.
	
\section*{Acknowledgment}
This research has been partially supported by the Brazilian Agencies CAPES and CNPq.
	
\bibliographystyle{unsrt}  
\bibliography{references}

\end{document}